\documentclass[ECP,preprint]{ejpecp}

\SHORTTITLE{Finite-entropy criterion for the entropic CCLT}
\TITLE{A Finite-Entropy Criterion for the Entropic Conditional Central Limit Theorem
\support{Supported by the National Key R\&D Program of China, No.~2023YFA1009603.}}

\AUTHORS{%
  Tong Ye\footnote{Academy of Mathematics and Systems Science, Chinese Academy of Sciences,
Beijing, 100190, China.
    \EMAIL{yetong22@mails.ucas.ac.cn}}
  \and
 Liu-Quan Yao\footnote{Academy of Mathematics and Systems Science, Chinese Academy of Sciences,
Beijing, 100190, China. \EMAIL{yaoliuquan20@mails.ucas.ac.cn}}
\and  Shuai Yuan\footnote{Academy of Mathematics and Systems Science, Chinese Academy of Sciences,
Beijing, 100190, China. \EMAIL{yuanshuai2020@amss.ac.cn}}\and 
Guanghui Wang\footnote{School of Mathematics, Shandong University, Jinan, 250100, China. \EMAIL{ghwang@sdu.edu.cn}}
}

\KEYWORDS{Conditional Entropy; Fisher information; Conditional Central Limit Theorem}

\AMSSUBJ{60F05}
\AMSSUBJSECONDARY{94A17}

\hypersetup{
  pdftitle={A Finite-Entropy Criterion for the Entropic Conditional Central Limit Theorem},
  pdfauthor={Tong Ye, Liu-Quan Yao, Shuai Yuan, Guanghui Wang}
}

\ABSTRACT{We prove a finite-entropy criterion for the entropic conditional central limit theorem. 
Let $(\xi_i,\eta_i)_{i\geq 1}$ be independent copies of a pair $(\xi,\eta)$, and set
$W_n=n^{-1/2}\sum_{i=1}^n \xi_i$ and $\boldsymbol{\eta}_n=(\eta_1,\ldots,\eta_n)$.
Under the assumptions that $\mathbb{E}\operatorname{Var}(\xi\mid\eta)<\infty$ and that 
the conditional law of $\xi$ given $\eta$ is absolutely continuous almost surely, we show that
$\mathbb{E}h(W_n\mid\boldsymbol{\eta}_n)$ converges to the Gaussian entropy 
$\frac12\log(2\pi e\sigma^2)$, where $\sigma^2=\mathbb{E}\operatorname{Var}(\xi\mid\eta)$, 
if and only if $\mathbb{E}h(W_{n_0}\mid\boldsymbol{\eta}_{n_0})>-\infty$ for some $n_0$.
The main technical ingredient is a continuity theorem for Fisher information under Gaussian smoothing, 
which allows us to replace the finite expected conditional Fisher-information assumption 
by a necessary and sufficient finite-entropy condition.}

\begin{document}

\section{Introduction}
The entropic central limit theorem, initiated by Linnik \cite{Linnik} 
and developed in its modern form by Barron \cite{barron1986entropy}, 
strengthens the classical central limit theorem by replacing weak convergence
 with convergence in relative entropy. 
For independent and identically distributed random variables $X_1,X_2,\ldots$ with
 mean zero and variance $\sigma^2$, 
Barron's theorem \cite{barron1986entropy} gives conditions under which the normalized sums $W_n=n^{-1/2}\sum_{i=1}^n X_i$
satisfy
\begin{equation}
    D(W_n\Vert Z_{\sigma^2})\to 0,
\end{equation}
where $Z_{\sigma^2}$ is Gaussian with variance $\sigma^2$. 
Equivalently, the differential entropies converge to the Gaussian entropy
\begin{equation}
	h(W_n)\to \frac12\log(2\pi e\sigma^2).
\end{equation}
Thus, together with Pinsker's inequality \cite{pinsker1964information}, this convergence implies the 
classical central limit theorem.

In a related direction, Johnson and Barron \cite{johnson2004fisher} proved 
a central limit theorem for Fisher information using projection methods.
Under a finite Poincaré-constant assumption, they also obtained convergence 
rates for Fisher information. Beyond the continuous setting, Gavalakis and 
Kontoyiannis \cite{gavalakis2024entropy} established an entropic CLT for 
discrete random variables via Bernoulli-part decomposition.

Conditional limit theorems arise naturally when the summands are observed together with a random 
environment or state variable. 
Necessary and sufficient conditions for conditional central 
limit theorems have been studied in several probabilistic settings, 
but the entropic conditional setting requires different information-theoretic tools. 
Given a pair $(\xi,\eta)$, let $(\xi_i,\eta_i)_{i\geq1}$ be independent copies and set
\begin{equation}
    W_n=\frac{\xi_1+\cdots+\xi_n}{\sqrt n},
    \quad
    \boldsymbol{\eta}_n=(\eta_1,\ldots,\eta_n).
\end{equation}
The entropic conditional central limit theorem concerns the asymptotic behavior of $h_n=\mathbb E h(W_n\mid\boldsymbol\eta_n)$.
Ma et al. \cite{ma2024entropic} recently proved such a theorem under a finite expected conditional Fisher-information condition.

The purpose of this note is to identify a necessary and sufficient finite-entropy condition for this convergence.
Assume that
\begin{equation}
	    \sigma^2=\mathbb E\operatorname{Var}(\xi\mid\eta)<\infty
\end{equation}
and that the conditional law of $\xi$ given $\eta$ is absolutely continuous almost surely. 
We prove that
\begin{equation}
    h_n\to \frac12\log(2\pi e\sigma^2)	
\end{equation}
if and only if
\begin{equation}
   h_{n_0}>-\infty	
\end{equation}
for some $n_0\geq1$. 
This gives a conditional analogue of Barron's finite-entropy criterion \cite{barron1986entropy} for the entropic central limit theorem.

The main new ingredient is a continuity result for Fisher information under Gaussian smoothing. 
More precisely, on a compact class of Gaussian-smoothed densities with uniformly integrable centered second moments, Fisher information is continuous with respect to the $L^1$ topology. 
This continuity theorem complements Bobkov's lower semicontinuity result \cite{Upper} 
by establishing continuity on a Gaussian-smoothed, tail-controlled compact class. 
It also allows us to pass from entropy convergence after Gaussian regularization 
to convergence of the conditional Fisher information. Combining this with de Bruijn's identity \cite{stam1959some} yields the stated entropic conditional central limit theorem.

The paper is organized as follows. 
Section \ref{sec:main} states the finite-entropy criterion and gives a simple example. 
Section \ref{sec:fisher} proves the Fisher-information continuity theorem and derives the conditional Fisher-information convergence proposition. 
Section \ref{sec:proof} proves the main theorem.

	\section{Main result }\label{sec:main}
  In this section, we state the main result of the paper, a finite-entropy criterion for the entropic conditional central limit theorem. 
We then discuss its relation to previous assumptions and give an illustrative example.

	\begin{theorem}[Finite-entropy criterion for the entropic CCLT]\label{th:main}
	Let $(\xi,\eta )$ be a pair of random variables. 
    Let $\{(\xi_i,\eta_i)\}_{i\geq 1}$ be independent copies of $(\xi,\eta)$.
	  Define
	  \begin{equation}
					W_n = \frac{\xi_1 + \xi_2 + \cdots + \xi_n}{\sqrt{n}}, 
			\quad \boldsymbol{\eta}_{n}= (\eta_1, \ldots, \eta_n).	
	  \end{equation}
			Assume that
\begin{equation}
			\sigma^{2} \triangleq\mathbb{E}\,\operatorname{Var}(\xi  \vert \eta) < \infty, 
		\quad (\xi \vert \eta) \ll \mathcal{L}, \ P_{\eta} \text{-a.s.},
\end{equation}
			where $\mathcal{L} $ denotes Lebesgue measure and the notation $\nu \ll \mathcal{L}$ indicates that $\nu$ is 
			absolutely continuous with respect to  $\mathcal{L}$. Define
	\begin{equation}
	h_{n}= \mathbb{E}\, h\!\left(W_{n} \vert \boldsymbol{\eta}_{n}\right).		
	\end{equation}		
			Then the convergence
\begin{equation}\label{2}
			\lim_{n \to \infty} h_n = \frac{1}{2} \log (2\pi e \sigma^2)
\end{equation}
holds if and only if there exists $n_0 \geq 1$ such that
\begin{equation}
	h_{n_{0}}= \mathbb{E}\, h\!\left(W_{n_0} \vert \boldsymbol{\eta}_{n_{0}}\right) > -\infty.
\end{equation}			

	\end{theorem}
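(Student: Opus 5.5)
Necessity is immediate: if $h_n\to\frac12\log(2\pi e\sigma^2)$, which is finite because $\sigma^2<\infty$ (and $\sigma^2>0$ by absolute continuity), then $h_n>-\infty$ for all large $n$. The content is sufficiency. The first step is an upper bound. Conditionally on $\boldsymbol\eta_n$, the variable $W_n$ has variance $V_n=\frac1n\sum_{i\le n}\operatorname{Var}(\xi\mid\eta_i)$. The Gaussian maximum-entropy bound and Jensen's inequality then give
\begin{equation*}
h_n\le \mathbb E\,\tfrac12\log(2\pi e V_n)\le \tfrac12\log(2\pi e\sigma^2).
\end{equation*}
So it suffices to prove $\liminf_n h_n\ge \frac12\log(2\pi e\sigma^2)$.

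Next I would propagate finiteness. Conditionally on $\boldsymbol\eta_n$ the summands are independent, and entropy increases under independent convolution ($h(X+Y)\ge h(X)$). Splitting $n=kn_0+r$ and grouping blocks of size $n_0$ therefore gives $h_n>-\infty$ for all $n\ge n_0$. The main device is Gaussian smoothing. Let $Z$ be an independent standard Gaussian. By de Bruijn's identity \cite{stam1959some}, applied conditionally,
\begin{equation*}
h(W_n+\sqrt{t}Z\mid\boldsymbol\eta_n)-h(W_n\mid\boldsymbol\eta_n)=\tfrac12\int_0^t J(W_n+\sqrt sZ\mid\boldsymbol\eta_n)\,ds .
\end{equation*}
I write $h_n^{(t)}$ for the expectation of the left-hand entropy. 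For fixed $t>0$ the smoothed conditional laws have Fisher information at most $1/t$. A smoothed-version argument (for example Ma et al.'s theorem \cite{ma2024entropic}, applied to the pairs $(\xi_i+\sqrt t Z_i,\eta_i)$, whose conditional Fisher information is finite) then gives
\begin{equation*}
h_n^{(t)}\to\tfrac12\log(2\pi e(\sigma^2+t)),
\end{equation*}
since $W_n+\sqrt tZ$ has the law of a normalized sum of smoothed summands. In the same spirit I would derive convergence of the conditional Fisher information $\mathbb E J(W_n+\sqrt t Z\mid\boldsymbol\eta_n)\to 1/(\sigma^2+t)$ for each $t>0$. This is where the paper's continuity theorem enters: on the compact class of Gaussian-smoothed densities with uniformly integrable centered second moments, Fisher information is $L^1$-continuous. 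Entropy convergence after smoothing forces the conditional laws to be $L^1$-close to the matching Gaussians, by Pinsker's inequality applied to the relative-entropy gap. Continuity then upgrades this to convergence of $J$. The uniform integrability needed to apply the theorem comes from the CLT-type behaviour of the conditional variances $V_n\to\sigma^2$ in $L^1$ (law of large numbers).

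It remains to let $t\downarrow0$ uniformly in $n$. This is the main obstacle. I need
\begin{equation*}
\limsup_{t\to0}\sup_{n}\bigl(h_n^{(t)}-h_n\bigr)=0,
\end{equation*}
that is, the integral $\frac12\int_0^t\mathbb E J\,ds$ must be uniformly small near $0$. Since we only know $h_{n_0}>-\infty$, there is no Fisher bound at $s=0$. I would first try subadditivity and monotonicity in $n$ along the subsequence $n=2^kn_0$. Via the entropy power inequality, $h_{2n}\ge h_n$ conditionally, so $h_n-h_n^{(t)}$ is controlled by the same gap at $n_0$. More precisely, the quantity $\mathbb E[h(W_n+\sqrt tZ\mid\cdot)-h(W_n\mid\cdot)]$ is nonincreasing along doublings, by a data-processing or Stam-type inequality for the mutual information $I(W_n;W_n+\sqrt tZ)$. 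At $n_0$ this gap tends to $0$ as $t\to0$ by dominated/monotone convergence, because $h_{n_0}$ is finite and entropy is continuous under vanishing Gaussian perturbation when the second moment is finite. Combining the three ingredients gives $\liminf h_n\ge \frac12\log(2\pi e(\sigma^2+t))-\varepsilon(t)$ with $\varepsilon(t)\to0$ along the doubling subsequence. I would then extend this to all $n$ using the block-grouping monotonicity from the second step. Letting $t\to0$ completes the proof.
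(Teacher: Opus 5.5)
Your core argument is correct, and it takes a genuinely leaner route than the paper. The paper uses its Fisher-information continuity theorem and the conditional Fisher-information proposition to show $\mathbb{E}J(W_{2^mn_0}+\sqrt tG\mid\boldsymbol{\eta}_{2^mn_0})\to1/(\sigma^2+t)$. It uses the Stam bound $\mathbb{E}J(W_{2^mn_0}+\sqrt tG\mid\boldsymbol{\eta}_{2^mn_0})\le\mathbb{E}J(W_{n_0}+\sqrt tG\mid\boldsymbol{\eta}_{n_0})$ only as a dominating function, so that it can evaluate the limit of the de Bruijn integral exactly. It then passes from the subsequence to all $n$ by superadditivity of $nh_n$ and Fekete's lemma.

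You instead exploit that $h_n\le\frac12\log(2\pi e\sigma^2)$ is free, so only a lower bound is needed. The same Stam bound, integrated in de Bruijn's identity, already gives
\[
h_{2^kn_0}\ \ge\ h^{(t)}_{2^kn_0}-\bigl(h^{(t)}_{n_0}-h_{n_0}\bigr).
\]
Ma et al.'s smoothed limit (which the paper also invokes) and $h^{(t)}_{n_0}-h_{n_0}\to0$ then close the argument. Consequently your paragraph deriving Fisher-information convergence via the continuity theorem is never used and can be deleted. Your route shows that the paper's main technical ingredient is not needed for Theorem \ref{th:main}. The paper's route buys, in addition, convergence of the smoothed conditional Fisher informations.

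Two steps need repair.

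\textbf{The doubling monotonicity.} The reason you give is off. The gap $h(X+\sqrt tZ)-h(X)$ equals $I(Z;X+\sqrt tZ)$, not $I(X;X+\sqrt tZ)$. Its decrease along doublings is also not a consequence of $h_{2n}\ge h_n$. The clean justification is to write the expected gap as $\frac12\int_0^t\mathbb{E}J(W_n+\sqrt sZ\mid\boldsymbol{\eta}_n)\,ds$. Then apply Stam's inequality $J\bigl((U+U')/\sqrt2\bigr)\le\frac12\bigl(J(U)+J(U')\bigr)$, conditionally on $\boldsymbol{\eta}_{2n}$, to $U=W_n+\sqrt sZ_1$ and $U'=W_n'+\sqrt sZ_2$.

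\textbf{The extension to all $n$.} This fails as you describe it. For $2^kn_0\le n<2^{k+1}n_0$, the bound $h(X+Y)\ge h(X)$ gives only $h_n\ge h_{2^kn_0}+\frac12\log(2^kn_0/n)\ge h_{2^kn_0}-\frac12\log2$, so a constant is lost. Three fixes are available:
\begin{itemize}
\item Use the paper's superadditivity $(m+n)h_{m+n}\ge nh_n+mh_m$ together with Fekete's lemma.
\item Run your doubling argument from every base $m\in[M,2M)$ with $M\ge n_0$, which is allowed since $h_m>-\infty$. Then compare $n$ with $2^k\lfloor n/2^k\rfloor$, choosing $k$ so that $\lfloor n/2^k\rfloor\in[M,2M)$. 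This loses only $\frac12\log(1+1/M)$, which vanishes as $M\to\infty$.
\item Most directly, split any $n\ge n_0$ into independent blocks of lengths in $[n_0,2n_0)$. Apply the weighted Stam inequality $J\bigl(\sum_i\sqrt{\lambda_i}U_i\bigr)\le\sum_i\lambda_iJ(U_i)$, for independent $U_i$ and $\sum_i\lambda_i=1$, inside the de Bruijn integral. This gives
\[
\sup_{n\ge n_0}\bigl(h^{(t)}_n-h_n\bigr)\ \le\ \sum_{j=n_0}^{2n_0-1}\bigl(h^{(t)}_j-h_j\bigr)\ \longrightarrow\ 0\qquad (t\downarrow0).
\]
This is exactly the uniform statement you said you need. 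Since the smoothed limit holds for every $n$, it yields convergence of the whole sequence at once.
\end{itemize}
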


\begin{remark}[Comparison with previous assumptions]\label{rem:comparison}
 Compared with the entropic conditional central limit theorem of Ma et al. \cite{ma2024entropic}, 
Theorem \ref{th:main} replaces the finite expected conditional Fisher-information assumption by the finite-entropy condition
\begin{equation}
     \mathbb E h(W_{n_0}\mid\boldsymbol{\eta}_{n_0})>-\infty
\end{equation}
for some $n_0\geq1$. 
It also assumes directly that $\mathbb E\operatorname{Var}(\xi\mid\eta)<\infty$, rather than the stronger condition $\operatorname{Var}(\xi)<\infty$. 
In particular, $\mathbb E h(\xi\mid\eta)>-\infty$ is a sufficient special case, corresponding to $n_0=1$.
\end{remark}

\begin{remark}[Conditional relative entropy convergence]\label{rem:relative-entropy}
The conclusion of Theorem \ref{th:main} also implies convergence in conditional relative entropy. 
Indeed,
\begin{align}
\mathbb{E}D(W_n\mid \boldsymbol{\eta}_n)
&=
\frac12\mathbb{E}\log\left(2\pi e\,\operatorname{Var}(W_n\mid \boldsymbol{\eta}_n)\right)
-
\mathbb{E}h(W_n\mid \boldsymbol{\eta}_n) \\
&\leq
\frac12\log\left(2\pi e\,\mathbb{E}\operatorname{Var}(W_n\mid \boldsymbol{\eta}_n)\right)
-
\mathbb{E}h(W_n\mid \boldsymbol{\eta}_n) \\
&=
\frac12\log(2\pi e\sigma^2)
-
\mathbb{E}h(W_n\mid \boldsymbol{\eta}_n).
\end{align}
Hence Theorem \ref{th:main} yields
\begin{equation}
    \lim_{n\to\infty}\mathbb{E}D(W_n\mid \boldsymbol{\eta}_n)=0.	
\end{equation}
When $\xi$ is independent of $\eta$, this recovers Barron's entropic central limit theorem \cite{barron1986entropy}. 
By Pinsker's inequality \cite{pinsker1964information}, this convergence is stronger than the corresponding conditional weak convergence.
		
	\end{remark}

\begin{example}[State-dependent non-Gaussian noise]\label{ex:state-dependent-noise}
Let $\eta_i\sim\mathrm{Bernoulli}(1/2)$, $i\in\mathbb N^+$, and suppose that
\[
\xi_i\mid(\eta_i=0)\sim\mathrm{Unif}(-1,1),
\qquad
\xi_i\mid(\eta_i=1)\sim\mathrm{Unif}(-2,2).
\]
Set $W_n=n^{-1/2}\sum_{i=1}^n\xi_i$ and $\boldsymbol\eta_n=(\eta_1,\ldots,\eta_n)$.
Then the conditional densities are
\begin{equation}
	f_{\xi \vert \eta}(x\vert 0)=\frac{1}{2} \boldsymbol{1}_{(-1,1)}(x), \quad
	f_{\xi \vert \eta}(x\vert 1)=\frac{1}{4} \boldsymbol{1}_{(-2,2)}(x).
\end{equation}
Moreover, $\operatorname{Var}(\xi\mid\eta=0)=1/3$ and $\operatorname{Var}(\xi\mid\eta=1)=4/3$, so $\sigma^2=\mathbb E\operatorname{Var}(\xi\mid\eta)=5/6$.
The finite-entropy condition holds since $h(\xi\mid\eta=0)=\log2$ and $h(\xi\mid\eta=1)=\log4$, hence $h_1=\mathbb Eh(\xi\mid\eta)=\frac32\log2>-\infty$.
Therefore, Theorem \ref{th:main} gives
\[
\lim_{n\to\infty}\mathbb Eh(W_n\mid\boldsymbol\eta_n)
=
\frac12\log\left(2\pi e\cdot\frac56\right)
=
\frac12\log\left(\frac{5\pi e}{3}\right).
\]
This example may be interpreted as a state-dependent noise model, where the channel 
state is $\eta$ and the distribution of the noise $\xi$ depends on the state. 
Although the conditional noise distribution varies with the state and is non-Gaussian, 
the average conditional entropy of the normalized sum converges to the entropy of a Gaussian 
random variable with variance $\sigma^2=\mathbb{E}\operatorname{Var}(\xi\mid\eta)$.
\end{example}

 \section{Fisher-information continuity and convergence}\label{sec:fisher}

In this section, we prove the main technical ingredients used in the proof of Theorem
 \ref{th:main}. The first result is a continuity theorem for Fisher information under Gaussian smoothing. 
The second result applies this continuity theorem to obtain convergence of the conditional Fisher information.

\subsection{Continuity of Fisher information}
For a density $f$, denote
\begin{equation}
	L^1(f\,dx)\triangleq L^1(\mathbb{R},f(x)\,dx),
    \qquad
    L^1(dx)\triangleq L^1(\mathbb{R},dx).
\end{equation}
For a random variable $X$, define
\begin{equation}
    L_X(R)
    \triangleq
    \mathbb{E}\left[
    (X-\mathbb{E}X)^2
    \mathbf{1}_{\{|X-\mathbb{E}X|\geq R\}}
    \right].
\end{equation}
For $a>0$ and a function $l$ satisfying $l(R)\to 0$ as $R\to\infty$, define
\begin{equation}
    \mathcal{I}_{a,l}
    \triangleq
    \left\{
    f_{X_0+G_a}:
    L_{X_0+G_a}(R)\leq l(R),\ \forall R>0,\ 
    \mathbb{E}X_0=0
    \right\},
\end{equation}
where $G_a\sim\mathcal N(0,a)$, $X_0$ is independent of $G_a$, and 
$f_{X_0+G_a}$ denotes the density of $X_0+G_a$. 
Let
\begin{equation}
   \overline{\mathcal I_{a,l}}
    \triangleq
    \operatorname{cl}_{L^1_{1+x^2}}(\mathcal I_{a,l}).
\end{equation}
The next theorem is the main technical ingredient of this section.
It states that the Gaussian-smoothed, tail-controlled class $ \overline{\mathcal I_{a,l}}$ is compact in $L^1$ 
and that Fisher information is 
 continuous on this class with respect to $L^1$ convergence.
\begin{theorem}[Fisher-information continuity]\label{thm:fisher-continuity}
The set $\overline{\mathcal I_{a,l}}$ is compact in $L^1_{1+x^2}$, and hence also compact in $L^1$.
Moreover, the Fisher-information functional
\begin{equation}
    p\mapsto J(p)
\end{equation}
is continuous on $\overline{\mathcal I_{a,l}}$ with respect to the $L^1$ topology.
Equivalently, if $p_n,p\in \overline{\mathcal I_{a,l}}$ and
\begin{equation}
    \Vert p_n-p\Vert_{L^1}\rightarrow 0, \, n \rightarrow \infty. 
\end{equation}
then
\begin{equation}
    J(p_n)\to J(p).
\end{equation}
\end{theorem}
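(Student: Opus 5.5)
We will prove the two assertions separately: first compactness, then continuity of $J$.

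\emph{Compactness.} We will show that $\mathcal I_{a,l}$ is totally bounded in $L^1_{1+x^2}$; its closure in this complete space is then compact. Every element has the form $f=\mathbb E\,\varphi_a(\cdot-X_0)$, where $\varphi_a$ is the $\mathcal N(0,a)$ density. Hence the family satisfies three uniform bounds:
\[
\|f\|_\infty\le (2\pi a)^{-1/2},\qquad \|f'\|_\infty\le C_a,\qquad \|f''\|_\infty\le C_a .
\]
The bound on $f'$ gives equicontinuity on $\mathbb R$. For the tails, the hypothesis $\mathbb EX_0=0$ gives $\mathbb E(X_0+G_a)=0$, so the tail estimate $L_{X_0+G_a}(R)\le l(R)$ reads
\[
\int_{|x|\ge R}x^2f(x)\,dx\le l(R)\to0 .
\]
In particular, taking $R=1$ gives $\int(1+x^2)f\le 2+l(1)$. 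Arzelà–Ascoli on $[-R,R]$ together with this uniform tail control then yields total boundedness in $L^1_{1+x^2}$. Limit points keep the same bounds on $f,f',f''$ and the same tail bound. Moreover, weak limits of the laws of $X_0$ exist by tightness, since $\mathbb E X_0^2\le \mathbb E(X_0+G_a)^2$ is uniformly bounded. Consequently every $p\in\overline{\mathcal I_{a,l}}$ is again of the form $\mathbb E\,\varphi_a(\cdot-X_0)$.

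\emph{Continuity.} Let $p_n\to p$ in $L^1$ inside the class. Because the class is uniformly equicontinuous and has bounded derivatives, we get $p_n\to p$ and $p_n'\to p'$ locally uniformly; the derivative case follows by interpolation using the $f''$ bound. Hence, for each fixed $R$,
\[
\int_{-R}^R \frac{p_n'^2}{p_n}\;\longrightarrow\;\int_{-R}^R\frac{p'^2}{p},
\]
because $p>0$ and continuous, so $p$ is bounded below on $[-R,R]$. It remains to show that the tail contributions $\int_{|x|>R}p_n'^2/p_n$ are uniformly small.

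\emph{Main step: the tail of the Fisher information.} For Gaussian mixtures the score has a conditional-expectation representation:
\[
\frac{p'(x)}{p(x)}=-\frac1a\,\mathbb E[x-X_0\mid X_0+G_a=x]=-\frac1a\,\mathbb E[G_a\mid Y=x],\qquad Y=X_0+G_a .
\]
By Jensen,
\[
\int_{|x|>R}\frac{p'^2}{p}\le a^{-2}\,\mathbb E\bigl[G_a^2\mathbf 1_{\{|Y|>R\}}\bigr].
\]
We bound the right side by Cauchy–Schwarz:
\[
\mathbb E\bigl[G_a^2\mathbf 1_{\{|Y|>R\}}\bigr]\le \bigl(\mathbb E G_a^4\bigr)^{1/2}\,\mathbb P(|Y|>R)^{1/2}\le \sqrt{3}\,a\,\bigl(l(R)/R^2\bigr)^{1/2}.
\]
This is uniform over the class and tends to $0$ as $R\to\infty$. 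For limit points $p\in\overline{\mathcal I_{a,l}}$ the same estimate holds, using the representation of $p$ as a Gaussian mixture established in the compactness step.

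Combining the local convergence on $[-R,R]$ with the uniform tail bound, and letting first $n\to\infty$ and then $R\to\infty$, gives $J(p_n)\to J(p)$.

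I expect the main obstacle to be verifying that elements of the closure retain the mixture structure, so that the conditional-score bound applies to $p$. If this turns out to be delicate, one can fall back on Bobkov's lower semicontinuity for $J(p)\le\liminf J(p_n)$, and use the uniform tail bound only on the approximating $p_n$ to obtain the matching $\limsup$.
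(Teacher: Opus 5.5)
Your argument is correct, and it takes a genuinely different route from the paper.

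\textbf{The paper's route.} The paper does not prove compactness directly. It rewrites $X_0+G_a$ as an Ornstein--Uhlenbeck image $p_t^*Z$ with $Z=e^{t}(X_0+\hat G_{a/2})$, transfers the tail control to a new function $l'$, and embeds $\mathcal I_{a,l}$ into the class $\mathcal H_{t,l'}$, whose compactness in $L^1_{1+x^2}$ is imported from Ma et al. For continuity, it passes to sub-subsequences along which the pre-smoothed densities $f_n$ converge in $L^1_{1+x^2}$. It gets $p_n'\to p'$ pointwise from the convolution formula. It then combines the pointwise domination $(p_n')^2/(4p_n)\le B_t\, f_n*\phi_{2t}$ (Proposition 5.8 of Ma et al.) with a generalized dominated convergence argument to obtain $(\sqrt{p_n})'\to(\sqrt p)'$ in $L^2$.

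\textbf{Your route.} You prove total boundedness by hand, via Arzel\`a--Ascoli plus the second-moment tails. You identify limit points as Gaussian mixtures; this in fact shows $\mathcal I_{a,l}$ is already closed, which also justifies the paper's ``without loss of generality $p_n\in\mathcal I_{a,l}$''. You upgrade $L^1$ convergence to locally uniform $C^1$ convergence via equi-Lipschitz bounds and Landau--Kolmogorov interpolation. Finally, you replace the pointwise dominating function by an integrated tail bound from the score identity $p'/p=-a^{-1}\mathbb E[G_a\mid Y]$.

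\textbf{What each buys.} Your route is self-contained, needs no subsequence extraction in the continuity step, and gives the explicit uniform tail modulus $\sqrt3\,a^{-1}\sqrt{l(R)}/R$. It also makes visible that $l$ is needed only for compactness in the weighted space. An $L^1$-convergent sequence is automatically uniformly tight, and your Cauchy--Schwarz bound only uses $\mathbb P(|Y_n|>R)$. The same tightness identifies the limit as a mixture, so $J$ is in fact $L^1$-continuous along any convergent sequence of $\mathcal N(0,a)$-smoothed densities. The paper's route instead stays inside the Ornstein--Uhlenbeck framework of Ma et al.\ and reuses its estimates, at the price of depending on those external results. Your fallback via Bobkov's lower semicontinuity is not needed, since the mixture representation of limit points goes through as you sketched.
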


The proof of Theorem \ref{thm:fisher-continuity} is given in Appendix \ref{app:fisher-continuity}.
The main point is that Gaussian smoothing provides uniform regularity, while the tail condition encoded by $l$ gives compactness in the weighted space $L^1_{1+x^2}$. 
These two ingredients allow the Fisher information functional to be continuous 
on $\overline{\mathcal I_{a,l}}$, although Fisher information is not continuous under general $L^1$ convergence. 

\subsection{Conditional Fisher-information convergence}

We now use Theorem \ref{thm:fisher-continuity} to prove the conditional 
Fisher-information convergence needed for the proof of Theorem \ref{th:main}. 
For random variables $X$ and $Y$, define
\[
    L_{X\mid Y}(R)
    \triangleq
    \mathbb E\left[
    (X-\mathbb E[X\mid Y])^2
    \mathbf 1_{\{|X-\mathbb E[X\mid Y]|\geq R\}}
    \mid Y
    \right].
\]

\begin{proposition}[Conditional Fisher-information convergence]
\label{prop:conditional-fisher}\hfill\break
Let $\{(X_n,Y_n)\}_{n\geq 1}$ be a sequence of pairs of random variables such that, for each $n$, 
the conditional distribution of $X_n$ given $Y_n$ is absolutely continuous with respect to Lebesgue measure almost surely. 
Suppose that
\begin{equation}
	    X_n=X_{0,n}+G_{a,n},
\end{equation}
where $G_{a,n}\sim\mathcal N(0,a)$ for some $a>0$, and $G_{a,n}$ is independent of $(X_{0,n},Y_n)$.

Assume that there exists a nonnegative function $l:[0,\infty)\to[0,\infty)$ such that $l(R)\to0$ as $R\to\infty$ and
\begin{equation}
    \mathbb{E}L_{X_n\mid Y_n}(R)\leq l(R),
    \qquad \forall n\in\mathbb N^+,\ R\geq 0.	
\end{equation}
Let $(X_n',Y_n')$ be an independent copy of $(X_n,Y_n)$. 
Suppose further that the limits
\begin{equation}\label{3.3}
    \lim_{n\to\infty}\mathbb{E}h(X_n\mid Y_n)
    \quad\text{and}\quad
    \lim_{n\to\infty}\mathbb{E}\operatorname{Var}(X_n\mid Y_n)
\end{equation}
exist, and that, for some $\lambda\in(0,1)$,
\begin{equation}
    \left|
    \mathbb{E}h(X_{n+1}\mid Y_{n+1})
    -
    \mathbb{E}h(X_n\mid Y_n)
    \right|
    =
    \mathbb{E}\left[
    h(X_n*_{\lambda}X_n'\mid Y_n,Y_n')
    -
    h(X_n\mid Y_n)
    \right],
\end{equation}
where
\begin{equation}
    X_n*_{\lambda}X_n'
    \triangleq
    \lambda X_n+\sqrt{1-\lambda^2}\,X_n' .
\end{equation}
Then
\begin{equation}
    \lim_{n\to\infty}\mathbb{E}J(X_n\mid Y_n)
    =
    \frac{1}{\sigma^2},
\end{equation}
where
\begin{equation}
    \sigma^2
    =
    \lim_{n\to\infty}
    \mathbb{E}\operatorname{Var}(X_n\mid Y_n).
\end{equation}
\end{proposition}

\begin{proof}
  Let $J_{n}=\mathbb{E}J(X_{n}\vert Y_{n}), V_{n}=\mathbb{E}\operatorname{Var}(X_{n}\vert Y_{n})$.  On the one hand,
  By the Cram\'{e}r--Rao inequality, for almost every $y_n$,
  \begin{equation}
        J(X_n\mid Y_n=y_n)\operatorname{Var}(X_n\mid Y_n=y_n)\geq 1.
  \end{equation}
Together with the Cauchy--Schwarz inequality, this gives
\begin{equation}
        V_nJ_n
    \geq
    \left(
    \mathbb{E}\sqrt{
    J(X_n\mid Y_n)\operatorname{Var}(X_n\mid Y_n)
    }
    \right)^2
    \geq 1, \, \forall n\in\mathbb{N}^{+}.
\end{equation}
    This implies $\liminf_{n\rightarrow \infty}J_{n} \geq \frac{1}{\sigma^{2}}$. On the other hand, 
	for any $\alpha>0$, it follows from Proposition 5.16 of \cite{ma2024entropic} that we can 
	find a function l decreasing to 0 such that the set 
	\begin{equation}
		U_{n} =\lbrace y_{n}:L_{X_{n}\vert y_{n}}  \leq l \rbrace,
	\end{equation}
	satisfies $\mathbb P(U_n)\geq 1-\alpha$. By Theorem \ref{thm:fisher-continuity}, the Fisher information is 
	continuous on $\overline{\mathcal I_{a,l}}$ with respect to the $L^{1}$ norm. Therefore,  using Pinsker's
	 inequality \cite{pinsker1964information}, 
     we obtain that, for any $\epsilon>0$, there exists $\delta(\epsilon,a,l)>0$ such that for any
	 $p,q \in \mathcal{I}_{a,l} $, 
	 \begin{equation}\label{3.12}
		KL(p\Vert q) \leq \delta \Longrightarrow \vert J(p)-J(q)\vert \leq \epsilon. 
	 \end{equation}
    Define 
	\begin{align}
		R_{n} &= \lbrace y_{n}:D(X_{n}\vert y_{n}) \leq \delta  \rbrace, \\
		W_{n} &= \lbrace y_{n}:\vert \operatorname{Var}(X_{n}\vert y_{n}) -\sigma^{2} \vert \leq \epsilon \rbrace.
	\end{align}
  Let $T_{n}=U_{n} \bigcap R_{n}\bigcap W_{n}$. By \eqref{3.12} and
	\begin{equation}
		D(X_{n}\vert y_{n})= \frac{1}{2}\ln 2\pi e \operatorname{Var}(X_{n}\vert y_{n}) -h(X_{n}\vert y_{n}),
	\end{equation}
     we obtain that 
	\begin{equation}
		\vert J(X_{n}\vert y_{n}) -\frac{1}{\operatorname{Var}(X_{n}\vert y_{n})} \vert  \leq \epsilon, \forall y_{n}\in T_{n}.
	\end{equation}
    Therefore, 
	 \begin{align}
		J_{n} &= \mathbb{E} \lbrack J(X_{n}\vert Y_{n}) \mathbf{1}_{T_{n}} \rbrack + 
		\mathbb{E} \lbrack J(X_{n}\vert Y_{n}) \mathbf{1}_{T_{n}^{C}} \rbrack \\
		&\leq  
		\mathbb{E}\left[
\left(
\epsilon+
\frac{1}{\operatorname{Var}(X_n\mid Y_n)}
\right)
\mathbf{1}_{T_n}
\right]+ \frac{1}{a}\mathbb P(T_{n}^{C}),
	 \end{align} 
	where the last inequality holds follows from
	\begin{equation}
		J(X_{n}\vert Y_{n}) = J(X_{0,n}+G_{a,n}\vert Y_{n}) \leq J(G_{a,n})=\frac{1}{a}.
	\end{equation}  
Note that for any $y_{n}\in T_{n}$, we have 
\begin{equation}
	\sigma^{2} -\epsilon \leq \operatorname{Var}(X_{n}\vert y_{n}) \leq \sigma^{2}+\epsilon.
\end{equation}
This implies 
\begin{align}
	J_{n} &\leq \epsilon+ \frac{1}{\sigma^{2}-\epsilon} +\frac{\mathbb P(U_{n}^{C})+\mathbb P(R_{n}^{C})
    +\mathbb P(W_{n}^{C})}{a}\\
  &\leq  \epsilon+ \frac{1}{\sigma^{2}-\epsilon} +\frac{\alpha+\mathbb P(R_{n}^{C})+ \mathbb P(W_{n}^{C})}{a}.
\end{align}
 From Lemma 5.1 of \cite{ma2024entropic} and equation (\ref{3.3}), we obtain
\begin{align}
	\lim_{n\rightarrow \infty}\mathbb P(R_{n}^{c})=&0\\
	\lim_{n\rightarrow \infty}\mathbb P(W_{n}^{c})=&0.
\end{align}
Therefore, 
\begin{equation}
	\limsup_{n\rightarrow \infty } J_{n} \leq \epsilon+\frac{1}{\sigma^{2}-\epsilon}+\frac{\alpha}{a}.
\end{equation}
Since $\alpha$ and $\epsilon$ are arbitrary, we have 
\begin{equation}    
	\lim_{n\rightarrow\infty}J_{n}= \frac{1}{\sigma^{2}}.
\end{equation}
\end{proof}

\section{Proof of Theorem \ref{th:main}} \label{sec:proof}
Now we can prove Theorem \ref{th:main}. The necessity is proved by contradiction. 
To prove sufficiency, we first verify the assumptions of Proposition \ref{prop:conditional-fisher}. Then we apply its result together with 
de Bruijn's identity \cite{stam1959some} to
 obtain the theorem's conclusion for a subsequence of $\{h_n\}$. Finally we prove the convergence of $\{h_n\}$  via subadditivity.

\begin{proof}
  We first prove necessity by contradiction. Assume that $h_n=-\infty$ for all $n\in\mathbb N^+$.
Then
\begin{equation}\label{4.2}
	\lim_{n\rightarrow \infty}h_{n}=-\infty.
\end{equation}
However, it follows from the assumptions of Theorem \ref{th:main} that since 
\begin{equation}
  (\xi\vert \eta) \ll \mathcal{L}, \ P_{\eta} \text{-a.s.}.
\end{equation}
we have 
\begin{equation}
	\sigma^{2} =\mathbb{E}\,\mathrm{Var}(\xi \vert \eta) >0.
\end{equation}
Therefore, from the condition of Theorem \ref{th:main}, 
\begin{equation}
	\lim_{n\rightarrow \infty } h_{n} =\frac{1}{2} \log (2\pi e \sigma^{2}) >- \infty.
\end{equation}
This yields a contradiction with equation (\ref{4.2}). 

Next, we prove the sufficiency part.  First, by  de Bruijn's identity \cite{stam1959some}, we have 
\begin{equation}
	h(X+\sqrt{s}G)-h(X)=\int_{0}^{s}\frac{J(X+\sqrt{t}G)}{2}dt,
\end{equation}
where $G\sim\mathcal N(0,1)$ is independent of $X$.   Therefore, 
\begin{equation}\label{4.7}
	 \mathbb{E}\, h\!\left(W_{n_{0}} \vert \boldsymbol{\eta}_{n_{0}}\right) 
	 =\mathbb{E}\, h\!\left(W_{n_{0}} +\sqrt{s}G\vert \boldsymbol{\eta}_{n_{0}}\right)
	 -\mathbb{E}\int_{0}^{s}
	 \frac{J(W_{n_{0}}+\sqrt{t}G\vert \boldsymbol{\eta}_{n_{0}} ) }{2}dt.
\end{equation}
Since
\begin{equation}
	h(X) \leq \frac{1}{2} \log (2\pi e \operatorname{Var}(X)),
\end{equation}
we have 
\begin{align}
	\mathbb{E}\, h\!\left(W_{n_{0}} +\sqrt{s}G\vert \boldsymbol{\eta}_{n_{0}}\right) 
	&\leq \mathbb{E}\frac{1}{2}\log (2\pi e \operatorname{Var}(W_{n_{0}} +\sqrt{s}G\vert \boldsymbol{\eta}_{n_{0}}))\\
	&\leq \frac{1}{2}\log (2\pi e \mathbb{E}\operatorname{Var}(W_{n_{0}} +\sqrt{s}G\vert \boldsymbol{\eta}_{n_{0}}))\label{4.10}\\
	&\leq \frac{1}{2}\log (2\pi e (\sigma^{2}+s))< \infty\label{4.11}.
\end{align}
where the inequality in (\ref{4.10})  follows from Jensen's inequality, and the inequality in (\ref{4.11}) follows from 
the independence of G and $(W_{n_{0}},\boldsymbol{\eta}_{n_{0}})$.
Since
\begin{equation}
	\mathbb{E}\, h\!\left(W_{n_{0}} \vert \boldsymbol{\eta}_{n_{0}}\right)  > -\infty,
\end{equation}
we have 
\begin{equation}
	\mathbb{E}\int_{0}^{s}
	 \frac{J(W_{n_{0}}+\sqrt{t}G\vert \boldsymbol{\eta}_{n_{0}} ) }{2}dt <\infty.
\end{equation}
Denote 
\begin{equation}
	X_{m}= W_{2^{m}n_{0}}+\sqrt{t}G, \quad Y_{m}= \boldsymbol{\eta}_{2^{m}n_{0}}.
\end{equation}
In order to apply Proposition \ref{prop:conditional-fisher}, we need to verify that its assumptions hold.
Let  
\begin{equation}
	\bar W_n
=
\frac{1}{\sqrt n}
\sum_{k=1}^n
\left(\tilde \xi_k-\mathbb E[\tilde \xi_k\mid \eta_k]\right),
\end{equation} 
where $\tilde{\xi}_{k} = \xi_{k}+\sqrt{t}G_{k},$ and $G_{k} \stackrel{i.i.d.}{\sim} \mathcal{N}(0,1) $ is 
independent of $\{\xi_{i},\eta_{i}\}_{i \in \mathbb{N}^{+}}$. 
By the argument in Appendix G of \cite{ma2024entropic},  we obtain $\{ \bar{W_{n}^{2}} \}$ is uniformly integrable. Let
\begin{equation}
	l(R) = \sup_{n}\mathbb{E} (\bar{W_{n}^{2}} 1_{\vert  \bar{W_{n}}\vert \geq R}),
\end{equation}
So $l(R)$ is a decreasing function and $l(R) \rightarrow 0, $ as $R \rightarrow \infty $. Therefore
\begin{equation}
\mathbb{E}L_{X_m\mid Y_m}(R)\leq l(R),
\quad \forall m\in\mathbb N^+,\ R\geq0.
\end{equation}
In addition,  
\begin{equation}
	\mathbb{E}\operatorname{Var}(X_{m}\vert Y_{m})=	\mathbb{E} \operatorname{Var}(W_{2^{m}n_{0}}+\sqrt{t}G \vert \boldsymbol{\eta}_{2^{m}n_{0}})=\sigma^{2}+t,
\end{equation}
then by Proposition \ref{prop:conditional-fisher} 
\begin{equation} \label{4.19}
	\lim_{m\rightarrow \infty}\mathbb{E}J(X_{m}\vert Y_{m}) =	\lim_{m\rightarrow \infty}\mathbb{E} J(W_{2^{m}n_{0}}
	+\sqrt{t}G\vert \boldsymbol{\eta}_{2^{m}n_{0}}) =\frac{1}{\sigma^{2}+t}.
\end{equation}
From \cite{ma2024entropic}, we obtain
\begin{equation}\label{4.20}
	\lim_{m\rightarrow \infty} \mathbb{E}\, h\!\left(W_{2^{m}n_{0}} +\sqrt{t}G\vert 
	\boldsymbol{\eta}_{2^{m}n_{0}}\right)  = \frac{1}{2}\log (2\pi e (\sigma^{2}+t)).
\end{equation}
And it follows from Fubini's Theorem  that %
\begin{equation}\label{4.21}
	\mathbb{E}\int_{0}^{s}
	 \frac{J(W_{n_{0}}+\sqrt{t}G\vert \boldsymbol{\eta}_{n_{0}} ) }{2}dt = \int_{0}^{s} \mathbb{E}
	 \frac{J(W_{n_{0}}+\sqrt{t}G\vert \boldsymbol{\eta}_{n_{0}} ) }{2}dt <\infty.
\end{equation}
 Combining  \eqref{4.19} and  \eqref{4.20}, it suffices to verify the  conditions of the %
 dominated convergence theorem. Since
 \begin{equation}
	J(\frac{X+Y}{\sqrt{2}}) \leq \frac{J(X)+J(Y)}{2},
 \end{equation} 
then 
\begin{equation}
	J(W_{2^{m}n_{0}}+\sqrt{t}G\vert \boldsymbol{\eta}_{2^{m}n_{0}}) \leq
	\frac{\sum_{i=1}^{2^{m}}J(W_{in_{0}+1}^{(i+1)n_{0}}+\sqrt{t}G \vert\boldsymbol{\eta}_{in_{0}+1}^{(i+1)n_{0}})}{2^{m}}.   
\end{equation}
Taking expectations on both sides, we obtain 
\begin{align}
	\mathbb{E}J(W_{2^{m}n_{0}}+\sqrt{t}G\vert \boldsymbol{\eta}_{2^{m}n_{0}})&\leq 
	\frac{\sum_{i=1}^{2^{m}}\mathbb{E}J(W_{in_{0}+1}^{(i+1)n_{0}}+\sqrt{t}G \vert \boldsymbol{\eta}_{in_{0}+1}^{(i+1)n_{0}})}{2^{m}} \\
 &\leq \mathbb{E}J(W_{n_{0}}+\sqrt{t}G\vert \boldsymbol{\eta}_{n_{0}}) ,
\end{align}
where the second inequality follows from identical distribution.
Since  \eqref{4.20} holds,  by using  the dominated convergence theorem, we obtain
\begin{equation}
	\lim_{m\rightarrow \infty}\mathbb{E}\int_{0}^{s}
	 \frac{J(W_{2^{m}n_{0}}+\sqrt{t}G\vert \boldsymbol{\eta}_{2^{m}n_{0}} ) }{2}dt=\int_{0}^{s}\frac{1}{2(\sigma^{2}+t)}dt =\frac{1}{2}\log(\frac{\sigma^{2}+s}{\sigma^{2}})
\end{equation}
Then combining equations (\ref{4.7}) and (\ref{4.19}), we get 
\begin{equation}
	\lim_{m\rightarrow \infty}\mathbb{E}h(W_{2^{m}n_{0}}\vert  \boldsymbol{\eta}_{2^{m}n_{0}})= \frac{1}{2}\log (2\pi e \sigma^{2}). 
  \end{equation}
  Finally we only need to show that $h_{n}$ converges. By the entropy jump inequality \cite{barron1986entropy}, if $X$ and $Y$ are independent, then
\begin{equation}
    h\left(\lambda X+\sqrt{1-\lambda^2}\,Y\right)
    \geq
    \lambda^2 h(X)+(1-\lambda^2)h(Y),
    \quad \lambda\in[0,1].
\end{equation}
For $m,n\geq1$, define
\begin{equation}
	    W_{n+1}^{m+n}
    \triangleq
    \frac{\xi_{n+1}+\cdots+\xi_{m+n}}{\sqrt m},
    \quad
    \boldsymbol{\eta}_{n+1}^{m+n}
    \triangleq
    (\eta_{n+1},\ldots,\eta_{m+n}).
\end{equation}
Applying the entropy jump inequality conditionally on $\boldsymbol{\eta}_{m+n}$ gives
\begin{align}
    h(W_{m+n}\mid \boldsymbol{\eta}_{m+n})
    &=
    h\left(
    \frac{\sqrt n}{\sqrt{m+n}}W_n+
    \frac{\sqrt m}{\sqrt{m+n}}W_{n+1}^{m+n}
    \,\middle|\,\boldsymbol{\eta}_{m+n}
    \right) \\
    &\geq
    \frac{n}{m+n}h(W_n\mid \boldsymbol{\eta}_{m+n})
    +
    \frac{m}{m+n}h(W_{n+1}^{m+n}\mid \boldsymbol{\eta}_{m+n}) \\
    &=
    \frac{n}{m+n}h(W_n\mid \boldsymbol{\eta}_n)
    +
    \frac{m}{m+n}h(W_{n+1}^{m+n}\mid \boldsymbol{\eta}_{n+1}^{m+n}).
\end{align}
Taking expectations on both sides, we obtain
\begin{equation}
    h_{m+n}
    \geq
    \frac{n}{m+n}h_n+\frac{m}{m+n}h_m,
\end{equation}
or equivalently,
\begin{equation}
    (m+n)h_{m+n}\geq n h_n+m h_m.
\end{equation}
Note that if $X$ and $Y$ are independent, then
\begin{equation}
    h(X+Y)\ge h(X).
\end{equation}
For $n>n_0$, we have
\begin{equation}
    W_n
    =
    \sqrt{\frac{n_0}{n}}\,W_{n_0}
    +
    \sqrt{\frac{n-n_0}{n}}\,W_{n_0+1}^{n}.
\end{equation}
Conditionally on $\boldsymbol{\eta}_n$, the two terms on the
right-hand side are independent. Hence,
\begin{equation}
    h(W_n\mid\boldsymbol{\eta}_n)
    \ge
    h(W_{n_0}\mid\boldsymbol{\eta}_{n_0})
    +
    \frac12\log\frac{n_0}{n}.
\end{equation}
Since
\begin{equation}
    h_{n_0}
    =
    \mathbb E\,
    h\!\left(
        W_{n_0}\mid\boldsymbol{\eta}_{n_0}
    \right)
    >-\infty,
\end{equation}
taking expectations yields
\begin{equation}
    h_n
    =
    \mathbb E\,
    h(W_n\mid\boldsymbol{\eta}_n)
    \ge
    h_{n_0}
    +
    \frac12\log\frac{n_0}{n}
    >-\infty.
\end{equation}
Moreover, 
\begin{equation}
    h_n
    \le
    \frac12\log(2\pi e\sigma^2)
    <\infty.
\end{equation}
Thus $h_n$ is finite for every $n>n_0$.

Since the sequence $a_n\triangleq n h_n$ is superadditive, Fekete's
lemma yields
\begin{equation}
    \lim_{n\to\infty}\frac{a_n}{n}
    =
    \sup_{n\ge1}\frac{a_n}{n}.
\end{equation}
Equivalently,
\begin{equation}
    \lim_{n\to\infty}h_n
    =
    \sup_{n\ge1}h_n.
\end{equation}
Therefore,
\begin{equation}
    \lim_{n\to\infty} h_n
    =
    \frac12\log(2\pi e\sigma^2).
\end{equation}
\end{proof}
 
\appendix

\section{Proof of Theorem \ref{thm:fisher-continuity}}
\label{app:fisher-continuity}

\begin{proof}[Proof of Theorem \ref{thm:fisher-continuity}]
  Without loss of generality, we assume that $a\in (0,1)$. For any $f_{X} \in \mathcal{I}_{a,l}$, let
   \begin{align}
 X= X_{0}+G_{a} \stackrel{d}{=}X_{0} +\hat{G}_{\frac{a}{2}} + G_{\frac{a}{2}}  &\stackrel{d}{=} X_{0} +\hat{G}_{\frac{a}{2}}
 +\sqrt{\frac{a}{2}}G \\	
 & = e^{-t}Z +\sqrt{1-e^{-2t}} G ,
   \end{align}
  where $t =\frac{-\ln(1-\frac{a}{2})}{2}$, $Z=e^{t}(X_{0}+\hat{G}_{\frac{a}{2}})$ and $\hat{G}_{\frac{a}{2}}$ 
  and $G_{\frac{a}{2}}$ are i.i.d. $\sim \mathcal{N}(0,\frac{a}{2})$. Set
  \begin{equation}
	p^{*}_{t} Z = e^{-t}Z +\sqrt{1-e^{-2t}} G,
  \end{equation}
  where $\{ p_{t}^{*}: t\geq 0\}$ denotes the adjoint operator of the Ornstein-Uhlenbeck semigroup with %
  respect to the Lebesgue measure.  Since 
  \begin{equation}
	\Gamma \triangleq \lbrace \vert X_{0} \vert \geq 2R \rbrace \bigcap \lbrace \vert G_{a} \vert  \leq R \rbrace 
 \subset \lbrace  \vert X\vert  \geq R\rbrace,
\end{equation}
we have
\begin{align}
	l(R)&\geq L_{X}(R) \geq \mathbb{E} \lbrack X^{2} \mathbf{1}_{\Gamma}\rbrack \geq  \mathbb{E}\lbrack 
	(X^{2}_{0}+2X_{0}G_{a}) \mathbf{1}_{\Gamma}\rbrack\\
	&\geq  L_{X_{0}}(2R)P(\vert G_{a}\vert  \leq R)+2\mathbb{E}\lbrack 
	X^{2}_{0} 1_{\vert X_{0}\geq 2R \vert }\rbrack\mathbb{E}\lbrack 
	G_{a} 1_{\vert G_{a}\leq R \vert }\rbrack    \\
	&\geq L_{X_{0}}(2R)(1-\frac{a^{2}}{R^{2}}),\label{A.7}
\end{align}
where the last inequality follows from Chebyshev's inequality and the symmetry of the Gaussian density. Therefore, we obtain the upper bound for $L_{X_{0}}(R)$. Note that 
 $Z=e^t(X_0+\widehat G_{a/2})$, where $\widehat G_{a/2}\sim\mathcal N(0,a/2)$ is independent of $X_0$. 
For $R>0$, set $r=Re^{-t}/2$. Since
\[
    \{|Z|\geq R\}
    \subset
    \{|X_0|\geq r\}\cup\{|\widehat G_{a/2}|\geq r\},
\]
we obtain
\begin{align}
    L_Z(R)
    &=
    \mathbb{E}\left[Z^2\mathbf{1}_{\{|Z|\geq R\}}\right] \notag\\
    &\leq
    2e^{2t}
    \mathbb{E}\left[
    (X_0^2+\widehat G_{a/2}^{\,2})
    \left(
    \mathbf{1}_{\{|X_0|\geq r\}}
    +
    \mathbf{1}_{\{|\widehat G_{a/2}|\geq r\}}
    \right)
    \right] \notag\\
    &\leq
    2e^{2t}L_{X_{0}}(r)
    +
    2e^{2t}L_{X_{0}}(0)\mathbb{P}(|\widehat G_{a/2}|\geq r)
    +
    ae^{2t}\mathbb{P}(|X_0|\geq r)
    +
    2e^{2t}L_{\widehat G_{a/2}}(r) \notag\\
	& \leq  e^{2t}(a+2)L_{X_{0}}(r)+2e^{2t}L_{X_{0}}(0)L_{\hat{G}_{\frac{a}{2}}}(r).
\end{align}
 And for $R \in \lbrack 0,2e^{t} \rbrack$, we have
 \begin{equation}
	L_{Z}(R) \leq L_{Z}(0) \leq 2 e^{2t} L_{X_{0}}(0)+2 e^{2t}
	L_{\hat{G}_{\frac{a}{2}}}(0)= 2e^{2t}(L_{X_{0}}(0)+\frac{a}{2}).
 \end{equation}
Denote
\begin{equation}
    l_0'
    \triangleq
    \max\Bigg\{2e^{2t}\left(L_{X_0}(0)+\frac{a}{2}\right), e^{2t}(a+2)L_{X_0}(1)
    +2e^{2t}L_{X_0}(0)L_{\widehat G_{a/2}}(1)
    \Bigg\}.
\end{equation}
And define
\begin{equation}
    l'(R)=
    \begin{cases}
    l_0',
    & R\in [0,2e^t],\\[0.4em]
    e^{2t}(a+2)
    L_{X_0}\!\left(\frac{R}{2}e^{-t}\right)
    +2e^{2t}L_{X_0}(0)
    L_{\widehat G_{a/2}}\!\left(\frac{R}{2}e^{-t}\right),
    & R>2e^t.
    \end{cases}
\end{equation}
Combining this definition with \eqref{A.7}, we obtain a nonnegative function $l'$ such that
$l'(R)\to0$ as $R\to\infty$ and
\begin{equation}
    L_Z(R)\leq l'(R).
\end{equation}
Denote 
 \begin{equation}
	p^{*}_{t}f  \triangleq f_{p^{*}_{t}X}                   %
 \end{equation}
where  $ f_{p^{*}_{t}X}    $ is the density function of $p^{*}_{t}X$.
Following \cite{ma2024entropic}, define 
\begin{equation}
	\mathcal{H}_{t,l} = \lbrace  p^{*}_{t}f: \,\mathbb{E}f=0, \, L_{f} \leq l,\, J(f) <\infty,\, f\in \mathcal{T} \rbrace
\end{equation}
where  $\mathcal{T}$ is the set of all absolutely continuous density functions. Therefore, 
\begin{equation}
	\mathcal{I}_{a,l} \subset \mathcal{H}_{t,l^{'}}
\end{equation}
Note that  $\overline{\mathcal{I}_{a,l} } $ is a closed set and 
 $H_{t,l'}$ is compact in $L^1_{1+x^2}$ by \cite{ma2024entropic},
so $\overline{\mathcal{I}_{a,l} } $ is also a compact set in $L^{1}(\mathbb{R},1+x^{2}dx)$. Finally, since 
\begin{equation}\label{5.11}
		\Vert \cdot \Vert_{L^{1}} \leq 	\Vert \cdot \Vert_{L^{1}_{1+x^{2}}}
\end{equation}
where $ L^{1} =L^{1}(\mathbb{R},dx) $ and $L^{1}_{1+x^{2}} = L^{1}(\mathbb{R},1+x^{2}dx)$, then 
$\overline{\mathcal{I}_{a,l} } $ is also a compact set in $L^{1}$.

Next, we prove the $L^1$-continuity of the Fisher information on $\overline{\mathcal{I}_{a,l} } $.
Let $p_n,p\in \overline{\mathcal{I}_{a,l} } $ be density functions such that
\begin{equation}
    \Vert p_{n} - p\Vert_{L^{1}}  \rightarrow 0,\, n\rightarrow \infty
\end{equation}
We shall prove that $J(p_n)\to J(p)$.
For each $n$, let $X_n$ be a random variable with density $p_n$ and without loss of generality, assume that $p_{n} \in \mathcal{I}_{a,l} $. 
 
First we show that there exists a subsequence $\{p_{n_{k}}\}$ such that $p^{'}_{n_{k}} \overset{\text{a.e.}}{\longrightarrow}
p^{'}$ where $p^{'} $ denotes the derivative of the density function $p$. Since $	\mathcal{I}_{a,l} \subset \mathcal{H}_{t,l^{'}}$
and $\mathcal{H}_{t,l^{'}}$ is a compact set, we can find a function $q \in \mathcal{H}_{t,l^{'}}$ and 
a subsequence $p_{n_{k}}$ such that 
\begin{equation}\label{5.12}
	p_{n_{k}} \overset{L^{1}_{1+x^{2}}}{\longrightarrow}  q, \, n_{k}\rightarrow \infty.
\end{equation}
Since $	p_{n} \overset{L^{1}}{\longrightarrow}  p$,  we obtain $p=q $ a.e.. Denote $X_{n} \stackrel{d}{=} 
p^{*}_{t}Z_{n}$. Let $f_n$ denote the density of $e^{-t}Z_n$, and let $\phi_t$ denote the density of $\sqrt{1-e^{-2t}}G$. Then it follows from equation (\ref{5.12}) that 
\begin{equation}\label{5.13}
	f_{n_{k}}\ast \phi_{t} \overset{L^{1}_{1+x^{2}}}{\longrightarrow} p,\, n_{k}\rightarrow \infty,
\end{equation}
where $f\ast g$ denotes the  convolution of f and g. Note $e^{-t}Z_{n_{k}} =X_{0,n_{k}}+\hat{G}_{\frac{a}{2}}$
so we can find $\tilde{t}$ and a function $\tilde{l} $ decreasing to 0 such that 
\begin{equation}
	e^{-t}Z_{n_{k}} \in \mathcal{H}_{\tilde{t},\tilde{l}}.
\end{equation}
Since $\mathcal{H}_{\tilde{t},\tilde{l}}$  is a compact set, we can also extract a 
subsequence such that 
\begin{equation}\label{A.19}
	f_{n_{k_{j}}} \overset{L^{1}_{1+x^{2}}}{\longrightarrow} f,  \, n_{k_{j}}\rightarrow \infty,
\end{equation}
where $f \in \mathcal{H}_{\tilde{t},\tilde{l}}$. For any $y \in \mathbb{R} $,
\begin{equation}\label{5.16}
		\vert(f_{n_{k_{j}}}-f) \ast \phi_{t} (y) \vert = \vert\int_{\mathbb{R}} (f_{n_{k_{j}}}(x)-f(x))  
	\phi_{t}(y-x) dx \vert\leq C_{t} \Vert f_{n_{k_{j}}} -f \Vert_{L^{1}},
\end{equation}
where $C_{t} = \frac{1}{\sqrt{2\pi (1-e^{-2t})}}$.
From equations (\ref{5.11}) and (\ref{A.19}), we have $\Vert f_{n_{k_{j}}} -f \Vert_{L^{1}} \rightarrow 0, n_{k_{j}}\rightarrow \infty$.
So $f_{ n_{k_{j}}}\ast \phi_{t}  \overset{\text{a.e.}}{\longrightarrow} f\ast \phi_{t},n_{k_{j}}\rightarrow \infty$. And by equation (\ref{5.13}),
we have $p = f\ast \phi_{t}$ a.e.. Since 
\begin{equation}
\begin{aligned}
    \left| p'_{n_{k_j}}(y)-p'(y)\right|
    &=
    \left| 
    \int_{\mathbb{R}} 
    \frac{y-x}{1-e^{-2t}}
    \left(f_{n_{k_j}}(x)-f(x)\right)
    \phi_t(y-x)\,dx
    \right| \\
    &\leq
    C'_t |y|
    \int_{\mathbb{R}}
    \left|f_{n_{k_j}}(x)-f(x)\right|\,dx
    +
    C'_t
    \int_{\mathbb{R}}
    |x|
    \left|f_{n_{k_j}}(x)-f(x)\right|\,dx \\
    &\leq
    C'_t |y|
    \left\|f_{n_{k_j}}-f\right\|_{L^1}
    +
    C'_t
    \left\|f_{n_{k_j}}-f\right\|_{L^1_{1+x^2}},
    \qquad \forall y\in\mathbb{R}.
\end{aligned}
\end{equation}
where $C_{t}^{'}=\frac{1}{(2\pi)^{\frac{1}{2}}(1-e^{-2t})^{\frac{3}{2}}}$. Hence,
 by \eqref{A.19}, we obtain 
\begin{equation}\label{A.23}
	p^{'}_{n_{k_{j}}} \overset{\text{a.e.}}{\longrightarrow}
p^{'}, \, n_{k_{j}} \rightarrow \infty.
\end{equation}
Secondly, since $\Vert p_{n} - p\Vert_{L^{1}}  \rightarrow 0, n\rightarrow \infty $, we can find a subsequence
such that $p_{n_{k}} \overset{\text{a.e.}}{\longrightarrow} p$. By \eqref{A.23}, we can 
further find a subsequence by the first step such that 
$p^{'}_{n_{k_{j}}} \overset{\text{a.e.}}{\longrightarrow}
p^{'}, \, n\rightarrow \infty$.  Using $(\sqrt{p})^{'}=\frac{p^{'}}{2\sqrt{p}}$, we obtain
\begin{equation}
	(\sqrt{p_{n_{k_{j}}}})^{'} \overset{\text{a.e.}}{\longrightarrow} (\sqrt{p})^{'}, \, n_{k_{j}}\rightarrow \infty,
\end{equation}
Thirdly,  let $p_{n_{k}}$ be a subsequence such that $(\sqrt{p_{n_{k}}})^{'} \overset{\text{a.e.}}{\longrightarrow} (\sqrt{p})^{'}$.
Since $p_{n_{k}} \in \mathcal{H}_{t,l^{'}}$ and by Proposition 5.8 of \cite{ma2024entropic}, we have 
\begin{equation}
	((\sqrt{p_{n_{k}}})^{'})^{2} = \frac{(p^{'}_{n_{k}})^{2}}{4p_{n_{k}}} \leq B_{t}p^{*}_{2t}(e^{t}Z_{n_{k}}),
\end{equation}
where $B_{t}> 0 $ is a constant. Note that $p^{*}_{2t}(e^{t}Z_{n_{k}})=f_{n_{k}}\ast \phi_{2t}$ and  
\begin{align}
	\int_{\mathbb{R}}\vert(f_{n_{k}}-f) \ast \phi_{2t}\vert (y) dy &= \int_{\mathbb{R}}
    \vert\int_{\mathbb{R}}
	(f_{n_{k}}(x)-f(x))  \phi_{2t}(y-x) dx \vert dy \\
	&\leq \Vert f_{n_{k}}-f\Vert_{L^{1}}, \\
\vert(f_{n_{k}}-f) \ast \phi_{2t}\vert (y) &\leq  \vert\int_{\mathbb{R}} (f_{n_{k}}(x)-f(x))  
	\phi_{2t}(y-x) dx \vert\leq C_{t}^{''} \Vert f_{n_{k}} -f \Vert_{L^{1}},
\end{align}
where $C^{''}_{t}=\frac{1}{\sqrt{2\pi (1-e^{-4t})}}$. Therefore by 
 \eqref{A.19} we  can extract a further subsequence such that
\begin{equation}
	f_{n_{k_j}}*\phi_{2t}\to f*\phi_{2t}
\quad\text{both a.e. and in }L^1.
\end{equation}
Then by the dominated convergence theorem, we obtain
\begin{equation}
	(\sqrt{p_{n_{k_{j}}}})^{'} \overset{L^{2}}{\longrightarrow}(\sqrt{p})^{'}, \,  n_{k_{j}}\rightarrow \infty.
\end{equation}
Finally, we have 
\begin{equation}
	J(p_{n_{k_{j}}}) \rightarrow J(p), \,  n_{k_{j}}\rightarrow \infty,
\end{equation}
due to $J(p_{n_{k_{j}}}) = 4 \Vert (\sqrt{p_{n_{k_{j}}}})^{'}   \Vert^{2}_{L^{2}} $. Therefore, for any 
subsequence $\{ J(p_{n_{k}})\}_{k \in \mathbf{N}}$, we can extract a further subsequence  such that 
$	J(p_{n_{k_{j}}}) \rightarrow J(p), \,  n_{k_{j}}\rightarrow \infty$. This proves that 
\begin{equation}
	J(p_{n}) \rightarrow J(p), \,  n\rightarrow \infty.
\end{equation}

\end{proof}

\begin{acks}
We thank Huazi Zhang for providing this research direction. We thank Zhi-Ming Ma
for his helpful comments.
\end{acks}

\end{document}